\newtheoremstyle{break}
  {9pt}
  {9pt}
  {\itshape}
  {}
  {\bfseries}
  {.}
  {\newline}
  {}
\theoremstyle{break}
\newtheorem*{thmA}{Theorem A}
\newtheorem*{thmB}{Theorem B}
\theoremstyle{plain}
\newtheorem{thm}{Theorem}[section]
\newtheorem{cor}[thm]{Corollary}
\newtheorem{lemma}[thm]{Lemma}
\newtheorem{prop}[thm]{Proposition}
\newtheorem{rem}[thm]{Remark}
\newtheorem*{ackn}{Acknowledgment}
\def\RR{{\mathbb R}}
\def\max{\operatorname{max}}
\def\Supp{\operatorname{Supp}}
\def\Exc{\operatorname{Exc}}
\def\trdeg{\operatorname{tr.deg}}
\def\NN{{\mathbb N}}
\def\PP{{\mathbb P}}
\def\B{\mathbf{B}}
\def\N{{\mathbf N}}
\def\*{\otimes}
\def\+{\oplus}                   
\def\*{\otimes}                  
\def\Supp{\operatorname{Supp}}
\def\Bs{\operatorname{Bs}}
\def\vol{\operatorname{vol}}
\newcommand{\ie}{{\rm i.e.\ }}
\begin{document}

\title[Augmented base loci and restricted volumes on normal varieties]{Augmented base loci and restricted volumes on normal varieties}

\author[S. Boucksom, S. Cacciola and A.F. Lopez]{S\'ebastien Boucksom*, Salvatore Cacciola** and Angelo Felice Lopez**}

\thanks{* Research partially supported by ANR projects MACK and POSITIVE.}

\thanks{** Research partially supported by the MIUR national project ``Geometria delle variet\`a algebriche" PRIN 2010-2011.}

\address{\hskip -.43cm CNRS-Universit{\'e} Pierre et Marie Curie, I.M.J., F-75251 Paris Cedex 05, France.
\newline e-mail {\tt boucksom@math.jussieu.fr}}

\address{\hskip -.43cm Dipartimento di Matematica e Fisica, Universit\`a di Roma
Tre, Largo San Leonardo Murialdo 1, 00146, Roma, Italy. \newline e-mail {\tt cacciola@mat.uniroma3.it, lopez@mat.uniroma3.it}}

\thanks{{\it Mathematics Subject Classification} : Primary 14C20. Secondary 14J40, 14G17.}

\begin{abstract}
We extend to normal projective varieties defined over an arbitrary algebraically closed field a result of Ein, Lazarsfeld, Musta{\c{t}}{\u{a}}, Nakamaye and Popa characterizing the augmented base locus (aka non-ample locus) of a line bundle on a smooth projective complex variety as the union of subvarieties on which the restricted volume vanishes. We also give a proof of the folklore fact that the complement of the augmented base locus is the largest open subset on which the Kodaira map defined by large and divisible multiples of the line bundle is an isomorphism. 
\end{abstract}

\maketitle

\section{Introduction}
\label{intro}
We work over an arbitrary algebraically closed field. The \emph{stable base locus} of a line bundle $L$ on a projective variety $X$ is the Zariski closed subset defined as
$$
\B(L):=\bigcap_{m\in \NN} \Bs\left(mL\right), 
$$
and the \emph{augmented base locus} (aka \emph{non-ample locus}) of $L$ is 
$$
\B_+(L):=\bigcap_{m\in \NN} \B\left(mL-A\right),
$$
where $A$ is any ample line bundle on $X$. This construction appears several times over in the literature (notably in \cite{n}), and was formally introduced in \cite[Def. 1.2]{elmnp1} (see also \cite[Def. 3.16]{b2} for its analytic counterpart for $(1,1)$-classes). Given its basic nature, it naturally plays an important role in birational geometry, as illustrated by \cite{t,hm,bdpp,bchm}, to mention only a few. 

Trivially, $\B_+(L)$ is empty iff $L$ is ample, while $\B_+(L)\ne X$ iff $L$ is big. In that case, the Kodaira map
$$
\Phi_m:X\dashrightarrow\PP H^0(X,mL)
$$
defined by the sections of $mL$ is birational onto its image for all $m$ sufficiently large and divisible. Our first main result is the following, which seems to be a folklore fact in the subject: 
\begin{thmA} Let $L$ be a big line bundle on a normal projective variety $X$. Then the complement $X\setminus\B_+(L)$ of the augmented base locus is the largest Zariski open subset $U\subseteq X\setminus\B(L)$ such that, for all large and divisible $m$, the restriction of the morphism
$$
\Phi_m:X\setminus\B(L)\to\PP H^0(X,mL)
$$ 
to $U$ is an isomorphism onto its image. 
\end{thmA}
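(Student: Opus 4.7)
My plan is to prove the two inclusions separately, writing $U$ for any Zariski open subset of $X\setminus\B(L)$ with the stated property. The forward inclusion $X\setminus\B_+(L)\subseteq U$ uses only elementary section arithmetic, while the reverse $U\subseteq X\setminus\B_+(L)$ will be obtained from a Nakamaye-type identification on a resolution.

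For the forward inclusion, pick a very ample divisor $A$ on $X$. Since the closed sets $\Bs(mL-A)$ are decreasing and meet in $\B_+(L)$, Noetherianity lets us choose $m$ large and divisible so that $\Bs(mL-A)=\B_+(L)$. For each $s\in H^0(X,mL-A)$, multiplication by $s$ embeds $H^0(X,A)$ into $H^0(X,mL)$, and over $X_s:=X\setminus\operatorname{div}(s)$ this sublinear system coincides with the very ample system $|A|$; hence $\Phi_m|_{X_s}$ is a locally closed immersion. Since $X\setminus\B_+(L)=\bigcup_s X_s$ and a generic $s$ is nonvanishing at any two prescribed points, $\Phi_m$ is globally injective and tangent-separating on $X\setminus\B_+(L)$, and the local immersions glue into an isomorphism of $X\setminus\B_+(L)$ onto an open subset of $Y:=\overline{\Phi_m(X\setminus\B(L))}$.

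For the reverse inclusion, let $x\in U$ and take $\pi\colon\widetilde X\to X$ a proper birational morphism from a normal variety resolving the base ideal of $|mL|$, so that $\pi^*mL=M+F$ with $M=\widetilde\Phi^*\O_Y(1)$ basepoint-free and $F\geq 0$ supported on $\pi^{-1}(\B(L))$. The hypothesis on $U$ forces $\widetilde\Phi\colon\widetilde X\to Y$ to be a local isomorphism at $\widetilde x:=\pi^{-1}(x)$, so no subvariety through $\widetilde x$ is contracted; Nakamaye's theorem applied to $M$ then gives $\widetilde x\notin\B_+(M)$. Together with $\widetilde x\notin\Supp(F)$ and the easy inclusion $\B_+(M+F)\subseteq\B_+(M)\cup\Supp(F)$, this yields $\widetilde x\notin\B_+(\pi^*mL)$. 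To descend to $X$, pick an effective $\pi$-exceptional divisor $E\geq 0$ on $\widetilde X$ with $-E$ relatively ample over $X$, so that $A':=\pi^*(nB)-E$ is ample on $\widetilde X$ for any ample $B$ on $X$ and $n\gg 0$; a section of $NM-A'$ nonvanishing at $\widetilde x$ then produces, via multiplication by the canonical sections of $NF$ and $E$ together with the identity $\pi_*\O_{\widetilde X}(E)=\O_X$, a section of $NmL-nB$ nonvanishing at $x$, showing $x\notin\B_+(L)$. The main obstacle is invoking Nakamaye's theorem on $\widetilde X$: for normal varieties over arbitrary algebraically closed fields this is precisely what the paper's main results (Theorems B and C) establish, so Theorem A ultimately appears as an application of these.
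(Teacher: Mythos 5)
Your forward inclusion is fine and is essentially the paper's own argument (twist by a very ample $A$ so that, for suitable $m$, a subsystem of $|mL|$ restricts to $|A|$ away from $\B_+(L)$). The genuine gap is in the key step of your reverse inclusion: from ``no subvariety through $\widetilde x$ is contracted by $\widetilde\Phi$'' you conclude $\widetilde x\notin\B_+(M)$ by invoking Nakamaye's theorem for the semiample bundle $M$, and you justify its availability by asserting that it is ``precisely what the paper's main results (Theorems B and C) establish.'' This is not so. There is no Theorem C, and Theorem B identifies $\B_+$ with the union of subvarieties on which the \emph{restricted volume} vanishes, which does not recover Nakamaye's null-locus description: for a nef bundle, $\vol_{X|Z}(M)=0$ does not imply $M^{\dim Z}\cdot Z=0$, because the restricted volume only counts sections extending from the ambient variety. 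The introduction says explicitly that the authors are \emph{not} able to deduce Nakamaye's theorem from their results, and that over general fields it requires the separate and substantially harder works \cite{ckm,bir}; importing those would defeat the stated purpose of an elementary proof. Worse, even if Theorem B did suffice, invoking it here would be circular: the paper proves Theorem B by reusing the identity $\mu_m^{-1}(\B_+(L))=\Exc(f_m)\cup\Supp F_m$ extracted from its proof of Theorem A.

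The gap is repairable, and the repair is exactly what the paper does. Your $M$ is not an arbitrary nef bundle: since $\Phi_m$ is birational onto its image for $m$ large and divisible, $M$ is the pullback of an ample line bundle under the birational morphism $f$ from $\widetilde X$ to the normalization of $\Phi_m(X)$. For such an $M$ one needs only the special case $\B_+(f^*A)=\Exc(f)$ of \cite[Prop.~2.3]{bbp} (Lemma~\ref{lem:bbp} of the paper), whose proof is elementary and characteristic-free; your hypothesis that $\Phi_m$ is an isomorphism near $x$ places $\widetilde x$ outside $\Exc(f)$, hence outside $\B_+(M)$. The remaining steps of your argument --- the inclusion $\B_+(M+F)\subseteq\B_+(M)\cup\Supp F$ and the descent to $X$ via an effective $\pi$-exceptional $E$ with $-E$ relatively ample (which is just reproving one inclusion of the same BBP lemma) --- then go through, and with that substitution your proof becomes essentially the paper's.
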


For every subvariety $Z\subseteq X$ not contained in $\B_+(L)$, the restriction of $L$ to $Z$ is big. Better still, the space of sections of $mL|_Z$ that extend to $X$ has maximal possible growth: if we denote by $H^0(X|Z,mL)$  the image of the restriction map $H^0(X,mL)\to H^0(Z,mL_{|Z})$ and set $d:=\dim Z$, then the \emph{restricted volume} of $L$ on $Z$, introduced in \cite{elmnp1} and defined as 
$$
\vol_{X|Z}(L) = \limsup\limits_{m \to + \infty} \frac{d!}{m^d}\dim H^0(X|Z, mL),
$$
is positive. In other words, we have
$$
\B_+(L)\supseteq\bigcup\limits_{\genfrac{}{}{0pt}{}{Z\subseteq X :}{\vol_{X|Z}(L)=0}} Z.
$$
Conversely, when $X$ is a smooth complex projective variety, \cite[Thm C]{elmnp2} states that $\vol_{X|Z}(L)=0$ for every irreducible component $Z$ of $\B_+(L)$, so that the above inclusion is an equality. The proof of this result is quite involved, using the whole arsenal of asymptotic invariants (jet separation, Hilbert-Samuel and Arnold multiplicities, etc...) and a delicate combination of Fujita approximation arguments with estimates for spaces of sections. 

The goal of the present paper is to provide an elementary proof of this result, valid furthermore for any normal projective variety over an arbitrary algebraically closed field.  

\begin{thmB} Let $X$ be a normal projective variety defined over an arbitrary algebraically closed field. For every line bundle $L$ on $X$ and every irreducible component $Z$ of $\B_+(L)$ we have $\vol_{X|Z}(L)=0$, and hence
$$
\B_+(L)=\bigcup\limits_{\genfrac{}{}{0pt}{}{Z\subseteq X :}{\vol_{X|Z}(L)=0}} Z.
$$
\end{thmB}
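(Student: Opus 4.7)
The plan is to leverage Theorem A to force the Kodaira image of $Z$ to have dimension strictly less than $\dim Z$, and then to bound $\vol_{X|Z}(L)$ by a routine degree estimate.

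I begin with reductions. If $L$ is not big then $\B_+(L)=X$ and $\vol(L)=0$. If $Z\subseteq\B(L)$, then all sections of $mL$ vanish on $Z$ for every sufficiently divisible $m$, so $\vol_{X|Z}(L)=0$ trivially. Assuming henceforth $L$ big and $Z\not\subseteq\B(L)$, set $d:=\dim Z$. For $m$ large and divisible, let $Y_m\subseteq\PP H^0(X,mL)$ be the image of the Kodaira map, and let $\mu\colon\tilde X\to X$ be the normalization of the closure of the graph of $\Phi_m$, so that $\tilde\Phi_m:=\Phi_m\circ\mu\colon\tilde X\to Y_m$ is a morphism, $\mu$ is an isomorphism over $X\setminus\B(L)$, and Theorem A ensures $\tilde\Phi_m$ is birational onto $Y_m$ with exceptional locus containing $\mu\i(\B_+(L)\setminus\B(L))$. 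In particular, the strict transform $\tilde Z$ of $Z$ lies in $\mathrm{Exc}(\tilde\Phi_m)$.

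The crucial geometric claim is that $d':=\dim \tilde\Phi_m(\tilde Z)<d$ for every sufficiently large and divisible $m$. Granting this, the identification $\dim H^0(X|Z,mL)=\dim\langle\Phi_m(Z)\rangle+1$ (with linear span taken in $\PP H^0(X,mL)$), combined with the classical non-degenerate bound $\dim\langle V\rangle\leq \dim V+\deg V-1$ in $\PP^N$ and the degree estimate $\deg\Phi_m(Z)=O(m^{d'})$---obtained by cutting $\tilde Z$ with $d-d'$ generic members of a fixed ample linear system on $\tilde X$ to produce a $d'$-dimensional subvariety mapping generically finitely onto $\Phi_m(Z)$, and then invoking the projection formula---yields $\dim H^0(X|Z,mL)=O(m^{d'})$ with $d'<d$. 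Dividing by $m^d$ and taking the limsup then gives $\vol_{X|Z}(L)=0$, as required.

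The main obstacle is the dimension drop $d'<d$. The naive implication ``$\tilde Z\subseteq\mathrm{Exc}(\tilde\Phi_m)\Longrightarrow d'<d$'' fails in general: a subvariety of the exceptional locus of a birational morphism need not be strictly contracted, for instance if it arises as a section of a fibration structure on the exceptional set. Here the maximality of $Z$ as an irreducible component of $\B_+(L)$ has to be used. I expect the argument to proceed by contradiction: if $\tilde\Phi_m|_{\tilde Z}$ were generically finite, then Zariski's main theorem applied to the birational morphism $\tilde\Phi_m$ would yield, through a generic $v\in\tilde Z$, a positive-dimensional fiber $F_v$ with $F_v\not\subseteq\tilde Z$. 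Pushing $F_v\setminus\tilde Z$ down to $X$ would then exhibit a subvariety of $\B_+(L)$ identified with $Z$ by $\Phi_m$ but not contained in $Z$, and varying $m$ while invoking the maximality clause of Theorem A should contradict $Z$ being a component of $\B_+(L)$.
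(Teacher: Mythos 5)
Your overall strategy coincides with the paper's: reduce to $Z\not\subseteq\B(L)$, use Theorem A to show that $\Phi_m$ contracts $Z$, and deduce $\vol_{X|Z}(L)=0$. The second half of your argument is a genuine (and correct) alternative to the paper's: where the paper invokes its Corollary \ref{cor:contracted}, resting on the Iitaka-type estimate of Proposition \ref{prop:bigO}, you instead use $\dim H^0(X|Z,mL)=\dim\langle\Phi_m(Z)\rangle+1$, the classical bound $\deg V\ge\codim_{\langle V\rangle}V+1$, and a degree estimate $\deg\Phi_m(Z)=O(m^{d'})$. This works (the degree bound needs the standard comparison of the nef class $\tilde\Phi_m^*\O(1)=\mu^*(mL)-F_m$ with a fixed multiple of an ample class, using that $\tilde Z\not\subseteq\Supp F_m$), and it bypasses Section 2 of the paper entirely.

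However, the crucial step --- the dimension drop $d'<d$ --- is a genuine gap. You correctly observe that $\tilde Z\subseteq\Exc(\tilde\Phi_m)$ does not by itself force contraction, but you then only sketch a contradiction ("I expect the argument to proceed by...", "should contradict..."), and the mechanism you describe does not work as stated: a fiber $F_v$ is contracted by $\tilde\Phi_m$, so its image in $X$ is not "identified with $Z$ by $\Phi_m$", and no variation of $m$ is involved. The missing idea, which is exactly what the paper supplies, is that the proof of Theorem A gives the \emph{equality} $\mu_m^{-1}(\B_+(L))=\Exc(f_m)\cup\Supp F_m$ (you only use the inclusion $\supseteq$). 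Since $Z$ is an irreducible \emph{component} of $\B_+(L)$ not contained in $\B(L)$, its strict transform is then an irreducible \emph{component} of $\Exc(f_m)$, and an irreducible component of the exceptional locus of a birational morphism between normal varieties is always contracted: by Zariski's main theorem every point of $\Exc(f_m)$ lies on a positive-dimensional connected fiber contained in $\Exc(f_m)$, and if $f_m$ were generically finite on the component $Z_m$, the fiber component through a general point of $Z_m$ would have to lie in some \emph{other} component of $\Exc(f_m)$, contradicting generality. (Translated into your set-up, that other component maps into another irreducible component of $\B_+(L)$ passing through the general point of $Z$, which is absurd.) Two further technical points: Zariski's main theorem in the form you need requires a normal target, so you must first replace $Y_m$ by the Stein factorization/normalization of the image, as the paper does with $\nu_m:Y_m\to\Phi_m(X)$; and you should also note that $Z$ is positive dimensional (the paper cites \cite[Proposition 1.1]{elmnp2}), without which the statement $d'<d$ is vacuously unattainable.
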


It is important to emphasize that the difficult original proof given in \cite{elmnp2} is actually valid for $\RR$-divisors, and that it yields a much stronger continuity result: if $Z$ is an irreducible component of $\B_+(L)$ and $A$ is ample, then 
\begin{equation}\label{equ:limvol}
\lim_{m\to+\infty}\vol_{X|Z}\left(L+\tfrac 1 m A\right)=0. 
\end{equation}
Already for $Z=X$, \ie when $L$ is not big, $\vol_{X|X}(L)=\vol(L)$ is zero just by definition, while $\lim_{m\to\infty}\vol\left(L+\tfrac 1 m A\right)=0$ amounts to the continuity of the volume function \cite[Thm 2.2.37]{l} (see also \cite[Cor 4.11]{b3} for the case of $(1,1)$-classes). 

The stronger continuity statement (\ref{equ:limvol}) also seems to be needed to recover Nakamaye's original result in the nef case \cite{n}, which states that
$$
\B_+(L)=\bigcup\limits_{\genfrac{}{}{0pt}{}{Z\subseteq X :}{L^{\dim Z}\cdot Z=0}} Z
$$
when $L$ is nef. While we are not able to deduce this result from Theorem A, it was recently established in positive characteristic in \cite{ckm}, for $(1,1)$-classes in \cite{ct} and for arbitrary projective schemes over a field in \cite{bir}. 

\begin{ackn}
We wish to thank Lorenzo Di Biagio, Gianluca Pacienza and Paolo Cascini for some helpful discussions.
\end{ackn}

\section{An Iitaka-type estimate for graded linear series} \label{growth}
Throughout the paper we work over an arbitrary algebraically closed field $k$. An \emph{algebraic variety} is by definition an integral separated scheme of finite type over $k$. Let $L$ be a line bundle on a projective variety $Z$, and assume that its section ring
$$
R(Z,L):=\bigoplus_{m\in\N} H^0(Z,mL)
$$
is non-trivial, so that the semigroup
$$
\N(Z,L):=\left\{m\in\N\mid H^0(Z,mL)\ne 0\right\}
$$
is infinite. Assuming that $k$ has characteristic $0$, S. Iitaka proved in \cite{i} the existence of $C>0$ such that
$$
C^{-1}m^{\kappa(Z,L)}\le h^0(Z,mL)\le C m^{\kappa(Z,L)}
$$
for all $m\in\N(Z,L)$. Here $\kappa(Z,L)$ is an integer in $\{0,...,\dim Z\}$ known as the \emph{Iitaka dimension} of $L$, and which can also be characterized as 
$$
\kappa(Z,L)=\trdeg\left(R(Z,L)/k\right)-1. 
$$
In \cite{i}, the assumption that $k$ has characteristic zero is used to apply Hironaka's resolution of singularities and flattening theorems. 

Here we provide a simple geometric argument, directly inspired by the proof of \cite[Lem. 3.6]{dp}, proving the following more general result:

\begin{prop}
\label{prop:bigO} Let $L$ be a line bundle on a projective variety $Z$. Let $W \subseteq R(Z,L)$ be a non-trivial graded subalgebra, and set $\kappa(W):=\trdeg(W)-1$. Then there exists $C>0$ such that 
\[ C^{-1}m^{\kappa(W)}\le\dim W_m\le Cm^{\kappa(W)} \]
for all $m\in\N(W)$.
\end{prop}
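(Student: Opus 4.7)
My plan is to construct an injective linear map from each $W_{\ell m_0}$ into the space of sections of a suitable big line bundle on an auxiliary normal projective variety of dimension exactly $\kappa$, reducing the problem to the standard polynomial growth of Hilbert functions. Since $\trdeg(W/k) = \kappa+1$, one may fix $m_0 \in \N(W)$ and algebraically independent $s_0, \dots, s_\kappa \in W_{m_0}$ (take homogeneous generators $f_i \in W_{d_i}$ of a transcendence basis of $W/k$ and set $s_i := f_i^{m_0/d_i}$ for any common multiple $m_0$ of the $d_i$). The ratios $s_i/s_0 \in k(Z)$ are then algebraically independent, so $\phi := [s_0 : \cdots : s_\kappa] : Z \dashrightarrow \PP^\kappa$ is dominant. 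Let $\Gamma$ be the normalization of the closure of the graph of $\phi$ in $Z \times \PP^\kappa$, with $p : \Gamma \to Z$ birational and $q : \Gamma \to \PP^\kappa$; extracting the fixed part of the linear system spanned by the $p^*s_i$ yields an effective Cartier divisor $E \ge 0$ on $\Gamma$ with $p^*(m_0 L) = q^* \O_{\PP^\kappa}(1) + E$ and $p^* s_i = q^* x_i \cdot e_E$. The Stein factorization $q = \pi \circ q'$ produces $\pi : Y \to \PP^\kappa$ finite with $Y$ a normal projective variety of dimension $\kappa$; set $M := \pi^* \O_{\PP^\kappa}(1)$.

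The central step is to construct a $k$-linear injection $\sigma_\ell : W_{\ell m_0} \hookrightarrow H^0(Y, \ell M')$, $s \mapsto s/s_0^\ell$, for a fixed big line bundle $M'$ on $Y$ of the form $M + cB$ (with $B$ a divisor and $c$ a constant absorbing a finite set of exceptional contributions). For any prime divisor $D \subset Y$ outside a finite exceptional set, a dimension count on the generic fibre of $q'$ over $D$ produces a prime divisor $D' \subset q'^{-1}(D)$ with $D' \not\subset E$: the top-dimensional components of the fibre have dimension $n - \kappa$ (where $n = \dim Z$), while the fibre meets $E$ in dimension at most $n - 1 - \kappa$. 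Along such a $D'$, one has $v_{D'}(e_E) = 0$, and the identity $p^* s_0 = q^* x_0 \cdot e_E$ gives
$$v_{D'}(s/s_0^\ell) \;=\; v_{D'}(p^* s) - \ell v_{D'}(q^* x_0) \;\ge\; -\ell v_{D'}(q^* x_0);$$
dividing by the ramification index of $D'$ over $D$ yields $v_D(s/s_0^\ell) \ge -\ell v_D(\pi^* H)$, where $H = \{x_0 = 0\}$. For the finitely many exceptional $D$ the extra pole order is bounded linearly in $\ell$, and is absorbed by the correction divisor $B$.

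Granted $\sigma_\ell$, the upper bound is immediate: $\dim W_{\ell m_0} \le h^0(Y, \ell M') = O(\ell^\kappa)$, since $M'$ is big on the $\kappa$-dimensional $Y$. The matching lower bound is free from the polynomial subring $V := k[s_0, \dots, s_\kappa] \subseteq W$, which gives $\dim W_{\ell m_0} \ge \dim V_{\ell m_0} = \binom{\ell + \kappa}{\kappa}$. Both bounds extend from the arithmetic progression $m_0 \N$ to the full semigroup $\N(W)$ by a standard semigroup argument: $\N(W)$ eventually contains every sufficiently large multiple of $d := \gcd(\N(W))$, and for representatives $n$ of each residue class in $\N(W) \bmod m_0$, multiplication by a nonzero $t_n \in W_n$ injects $V_{\ell m_0}$ into $W_{n + \ell m_0}$, reducing to the case of multiples of $m_0$. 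The main obstacle is the well-definedness of $\sigma_\ell$: a careful valuation computation on the graph closure $\Gamma$ that uses only projective modifications and so avoids resolution of singularities, which is precisely what allows the argument to work in arbitrary characteristic.
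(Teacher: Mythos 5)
Your proposal is correct, but it follows a genuinely different route from the paper. The paper first reduces to an uncountable base field (the estimate being invariant under base field extension), then uses the fact that the Kodaira maps $\Psi_m$ of $W_m$ have image of dimension $\kappa(W)$ for large $m\in\N(W)$ together with the observation that an ample divisor dominates any lower-dimensional image, to cut out a \emph{very general} complete intersection $T\subseteq Z$ with $\dim T=\kappa(W)$ and $\Psi_m(T)=\Psi_m(Z)$ for all large $m\in\N(W)$ simultaneously; the restriction $W_m\to H^0(T,mL_{|T})$ is then injective (a section vanishing on $T$ would give a hyperplane section of $\Psi_m(Z)$ equal to all of $\Psi_m(Z)$), and the bound $h^0(T,mL_{|T})=O(m^{\kappa(W)})$ finishes the proof. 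You instead build the dominant map $\phi:Z\dashrightarrow\PP^{\kappa}$ from a homogeneous transcendence basis, pass to the Stein factorization $Y$ of the resolved map, and inject $W_{\ell m_0}$ into sections of a fixed divisor class on the $\kappa$-dimensional $Y$ by dividing by $s_0^{\ell}$, controlling poles by a valuation computation on the graph. Your approach buys independence from the uncountability reduction and from any ``very general'' choices, at the cost of heavier bookkeeping: the fixed-part decomposition, ramification indices, the correction divisor $B$, and the semigroup reduction from $m_0\NN$ to $\N(W)$ (which the paper handles uniformly, since $T$ works for all large $m\in\N(W)$ at once). Two points in your write-up deserve to be made explicit. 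First, the well-definedness of $\sigma_\ell$ rests on $s/s_0^{\ell}$ actually lying in $k(Y)$: this holds because $s/s_0^{\ell}$ belongs to the homogeneous fraction field of $W$, which is algebraic over $k(s_1/s_0,\dots,s_\kappa/s_0)=q^{*}k(\PP^\kappa)$ since $\trdeg(W)=\kappa+1$, while $k(Y)$ is algebraically closed in $k(\Gamma)$ by the Stein factorization --- this is the entire reason for introducing $Y$, and without it $v_D(s/s_0^{\ell})$ is not even defined. Second, your dimension count for finding $D'\not\subseteq\Supp E$ is stated somewhat circularly; the clean statement is that a component $D'$ of $q'^{-1}(D)$ \emph{dominating} $D$ (one always exists) which is contained in $\Supp E$ must equal a component $E_j$ of $E$ with $\overline{q'(E_j)}=D$, so the exceptional $D$ are exactly the finitely many divisorial images of components of $E$. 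With these two clarifications the argument is complete and, like the paper's, avoids resolution of singularities and works in any characteristic.
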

We have set as usual
$$
\N(W)=\left\{m\in\N\mid W_m\ne 0\right\}.
$$
\begin{rem} Using the theory of Okounkov bodies, a much more precise estimate can actually be obtained. Indeed, it is proved in \cite[Thm 4]{kk} (see also \cite[Thm 0.1]{b}) that there exists $c\in(0,+\infty)$ such that
$$
\dim W_m=c\,m^{\kappa(W)}+o\left(m^{\kappa(W)}\right) 
$$
as $m\in\N(W)$ tends to $+\infty$. 
\end{rem}

The proof of Proposition \ref{prop:bigO} relies on the following standard facts. 
\begin{lemma}\label{lem:dimphi}
In the notation of Proposition \ref{prop:bigO}, let
$$
\Psi_m : Z \dashrightarrow \PP W_m 
$$
be the Kodaira map defined by the linear series $W_m$, with $m\in\N(W)$. Then
$$
\kappa(W) = \max\left\{ \dim \Psi_m(Z)\mid m \in \N(W)\right\}.
$$
\end{lemma}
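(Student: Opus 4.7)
The plan is to identify $\kappa(W)$ with the transcendence degree over $k$ of a natural subfield $W_{(0)} \subseteq k(Z)$ built out of homogeneous ratios in $W$, and then to match that number with $\max_m \dim \Psi_m(Z)$ by exhibiting a single degree $d$ whose ratios from $W_d$ already generate $W_{(0)}$.

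First I would observe that, after fixing a rational trivialization of $L$, the section ring $R(Z,L)$ embeds as a graded subring of $k(Z)[u]$, so $W$ is a graded domain. Its fraction field inherits a $\ZZ$-grading; write $W_{(0)}$ for the degree-zero part, concretely the collection of ratios $s/t$ with $s,t \in W_d \setminus \{0\}$ for some $d \in \N(W)$. Through the trivialization, $W_{(0)}$ sits canonically inside $k(Z)$, and for any nonzero homogeneous $u_0 \in W$ one has $\mathrm{Frac}(W) = W_{(0)}(u_0)$ with $u_0$ automatically transcendental over $W_{(0)}$. Consequently $\kappa(W) = \trdeg(W) - 1 = \trdeg_k W_{(0)}$.

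Next, for $m \in \N(W)$ fix any $s_0 \in W_m \setminus \{0\}$ and set $K_m := k(s/s_0 : s \in W_m) \subseteq W_{(0)}$. The standard description of a Kodaira map shows that $k(\Psi_m(Z)) = K_m$, so $\dim \Psi_m(Z) = \trdeg_k K_m \le \kappa(W)$ for every $m \in \N(W)$. For the reverse inequality, pick a transcendence basis $\alpha_1, \dots, \alpha_r$ of $W_{(0)}$ over $k$ (with $r = \kappa(W)$) and express $\alpha_i = s_i/t_i$ with $s_i, t_i \in W_{d_i} \setminus \{0\}$. Setting $d := \sum_i d_i$ and $T := \prod_j t_j \in W_d$, each $\alpha_i = \bigl(s_i \prod_{j \ne i} t_j\bigr)/T$ is a ratio of two elements of $W_d$, whence $\alpha_1, \dots, \alpha_r \in K_d$ and $\dim \Psi_d(Z) \ge r = \kappa(W)$.

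The only mildly subtle point is giving a clean meaning to the inclusion $W_{(0)} \subseteq k(Z)$ — i.e.\ interpreting ratios of sections of $mL$ as rational functions on $Z$ — but once this dictionary is in place the argument reduces to elementary bookkeeping with transcendence bases in graded domains.
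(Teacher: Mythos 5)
Your proof is correct and follows essentially the same route as the paper: both identify $\kappa(W)$ with the transcendence degree over $k$ of the homogeneous degree-zero fraction field (your $W_{(0)}$, the paper's $K_0(W)$) and recognize $k(\Psi_m(Z))$ as the subfield generated by ratios from $W_m$. The only cosmetic difference is at the last step, where you exhibit a single degree $d$ by clearing denominators of a transcendence basis, while the paper invokes finite generation of $K_0(W)$ as a subfield of $K(Z)$; both arguments are sound.
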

\begin{proof} We recall the easy argument. Introduce the homogeneous fraction field 
$$
K_0(W) =\left\{f/g\mid f,g \in W_m \mbox{\ for some } m \right\}.
$$
The fraction field of $W$ is then a purely transcendental extension of degree one of $K_0(W)$, and hence $\kappa(W) = \trdeg(K_0(W)/k)$. Now $K_0(W)$, being a subfield of $K(Z)$, is also finitely generated over $k$, and it is thus the fraction field of the graded subalgebra spanned by $W_m$ for all $m\in\N(W)$ large enough. But the latter is the function field of $\Psi_m(Z)$, almost by definition.
\end{proof}

\begin{lemma}
\label{lem:dom} Let $f:Z\dashrightarrow Y$ be a dominant rational map between projective varieties such that $\dim Z>\dim Y$. Then any irreducible ample divisor $H$ of $Z$ dominates $Y$. 
\end{lemma}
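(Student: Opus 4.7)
The strategy is to argue by contradiction, relying on the classical fact that the complement $Z\setminus H$ of an ample Cartier divisor on a projective variety is affine, hence contains no complete subvariety of positive dimension. Concretely, I will assume $f(H)\ne Y$ and produce a positive-dimensional projective subvariety of $Z\setminus H$, which is the desired contradiction.

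To bypass the indeterminacy of $f$, replace it by its graph: let $\Gamma\subseteq Z\times Y$ be the Zariski closure of the graph of $f$ over its domain of definition, and let $p\colon\Gamma\to Z$, $q\colon\Gamma\to Y$ be the two projections. Then $\Gamma$ is an irreducible projective variety of dimension $\dim Z$, $p$ is birational, and $q$ is surjective because $f$ is dominant. The assumption $f(H)\ne Y$ translates into $q(p^{-1}(H))\subsetneq Y$, and since $q$ is proper the set $q(p^{-1}(H))$ is closed in $Y$, so $V:=Y\setminus q(p^{-1}(H))$ is a non-empty Zariski open subset.

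Now pick $y\in V$ sufficiently general. Because $q$ is a dominant morphism between irreducible varieties with $\dim\Gamma-\dim Y\ge 1$, the general fiber $q^{-1}(y)$ is equidimensional of dimension $\dim Z-\dim Y\ge 1$. Writing $E\subsetneq\Gamma$ for the exceptional locus of $p$, a dimension count applied to each irreducible component of $E$ (using that such a component, if it dominates $Y$, has general fiber of dimension $<\dim\Gamma-\dim Y$, since its dimension is $<\dim\Gamma$) shows that for a general $y$ no component of $q^{-1}(y)$ lies inside $E$. Therefore $F_y:=p(q^{-1}(y))$ is a closed subset of $Z$ each of whose irreducible components has dimension exactly $\dim Z-\dim Y\ge 1$. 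Moreover $F_y\cap H=\emptyset$: if $z\in F_y\cap H$ there would exist $z'\in q^{-1}(y)$ with $p(z')=z\in H$, hence $z'\in q^{-1}(y)\cap p^{-1}(H)$, forcing $y=q(z')\in q(p^{-1}(H))$ and contradicting $y\in V$. Thus $F_y$ is a complete positive-dimensional subvariety of the affine variety $Z\setminus H$, a contradiction.

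The only delicate point is the dimension count in the third paragraph: once the rational map is resolved to the morphism $q$, one has to make sure that general fibers of $q$ still produce positive-dimensional closed subvarieties of $Z$ after being pushed down by the birational projection $p$. This is exactly where the hypothesis $\dim Z>\dim Y$ enters essentially, both to guarantee that $q^{-1}(y)$ has positive dimension and to ensure that its components are not entirely swallowed by the exceptional locus of $p$.
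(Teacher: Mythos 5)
Your proof is correct, and it establishes the statement in the same sense as the paper does: you read ``$H$ dominates $Y$'' through the \emph{total} transform, i.e.\ you prove $q\bigl(p^{-1}(H)\bigr)=Y$, which matches the paper's own proof (there one shows that $\Supp\mu^{*}H=\mu^{-1}(H)$ meets the general fiber, again a total-transform statement); this is in fact the only reading under which the lemma holds for an arbitrary irreducible ample divisor, since the strict transform of, say, a line through the center of a projection $\PP^2\dashrightarrow\PP^1$ does not dominate. The skeleton of your argument also coincides with the paper's: resolve the indeterminacy via the graph, and use $\dim Z>\dim Y$ both to make the general fiber over $Y$ positive-dimensional and to ensure it is not swallowed by the exceptional locus of the birational projection to $Z$ (the paper achieves the latter by picking a general curve $C'$ in a general fiber whose image $C$ in $Z$ is still a curve). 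Where you genuinely diverge is in how ampleness enters at the final step: the paper computes $\mu^{*}H\cdot C'=H\cdot C>0$ by the projection formula, forcing $\mu^{-1}(H)$ to meet the fiber, whereas you push the fiber down to a positive-dimensional complete closed subvariety $F_y$ of $Z$ disjoint from $H$ and contradict the affineness of $Z\setminus H$. Your route dispenses with the Stein factorization and with intersection numbers on a possibly singular model, at the cost of invoking the classical fact that the complement of an effective ample Cartier divisor is affine and of carrying out a slightly more explicit dimension count against $\Exc(p)$; both mechanisms are standard and characteristic-free, so either proof serves the paper equally well.
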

Even though this fact probably sounds obvious, we provide a proof for completeness. 
\begin{proof} Let $Z'$ be the normalization of the graph of $f$, which comes with a birational morphism $\mu: Z'\to Z$ such that $g:=f\circ\mu:Z'\to Y$ is a morphism. Let also 
$$ 
\xymatrix{ Z' \ar^{f'}[r] & Y'  \ar^{\nu}[r]  & Y } 
$$
be the Stein factorization of $g$, so that $Y'$ is normal and $f'$ has connected fibers. Since $\nu$ is surjective, it is enough to show that $f'(\mu^*H)=Y'$. If $C'\subseteq Z'$ is a general curve contained in a general $f'$-fiber, then $C:=\mu(C')$ is also a curve, and hence $\mu^*H\cdot C'=H\cdot C>0$ by the projection formula and the ampleness of $H$. It follows that $\mu^*H$ meets the general fiber of $f'$, and hence $f'(\mu^*H)=Y'$. 
\end{proof}

\begin{proof}[Proof of Proposition \ref{prop:bigO}] The lower bound on $\dim W_m$ is a general property of graded integral domains, and is easy to get, choosing $\kappa(W)+1$ algebraically independent homogeneous elements of $W$. 

To get the upper bound, we first observe that the base field $k$ may be assumed to be uncountable, since the desired estimate is invariant under base field extension. By Lemma \ref{lem:dimphi}, we have $\dim  \Psi_m(Z) = \kappa(W)$ for all $m \in \N(W)$ large enough. If $\kappa(W) = \dim(Z)$ let $A$ be an ample divisor on $Z$ such that $L \leq A$. Then 
$$
\dim W_m \le h^0(mL) \le h^0(mA) \le  Cm^{\dim(Z)} = Cm^{\kappa(W)}.
$$ 
If $\kappa(W) < \dim(Z)$, Lemma \ref{lem:dom} allows to choose a complete intersection $T \subseteq Z$ of very ample divisors, very general in their linear series, in such a way that $\dim T = \kappa(W)$ and $\Psi_m(T)= \Psi_m(Z)$ for all $m\in\N(W)$ large enough. At this point, the uncountability of $k$ is used, since we impose countably many conditions. Now we claim that the restriction map 
$$
W_m \to H^0(T,mL_{|T})
$$ 
must be injective for all $m\in\N(W)$ large enough, which will provide the desired upper bound on $\dim W_m$. Indeed, if there is a non-zero section $s \in W_m$ vanishing along $T$ and with zero divisor $D$, then $\Psi_m(\Supp D) = \Psi_m(Z)$. But this gives a contradiction since $\Psi_m(\Supp D)$ is, by definition of $\Psi_m$, a hyperplane section of $\Psi_m(Z)$, while the latter is linearly non-degenerate in $\PP W_m$.
\end{proof}
The main consequence for us is:

\begin{cor}\label{cor:contracted} Let $L$ be a line bundle on a projective variety $X$, and assume that $Z\subset X$ is a positive dimensional subvariety not contained in the stable base locus $\B(L)$. Then $\vol_{X|Z}(L)=0$ iff for all $m$ large and divisible enough the Kodaira map $\Phi_m:X\dashrightarrow\PP H^0(X,mL)$ defined by the sections of $mL$ contracts $Z$, \ie
$$
\dim\Phi_m(Z)<\dim Z.
$$
\end{cor}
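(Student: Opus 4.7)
The plan is to apply Proposition \ref{prop:bigO} to the graded subalgebra $W\sub R(Z,L|_Z)$ of restricted sections,
$$W_m:=H^0(X|Z,mL)=\Im\bigl(H^0(X,mL)\to H^0(Z,mL|_Z)\bigr),$$
which really is a subalgebra since restriction is a ring homomorphism. Because $Z\not\sub\B(L)$, one can choose some $m_0$ and a section $s\in H^0(X,m_0 L)$ not vanishing identically on $Z$; raising $s$ to powers shows $W_{km_0}\ne 0$ for every $k\ge 1$, so every sufficiently large and divisible integer lies in $\N(W)$.

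Setting $d:=\dim Z$, Proposition \ref{prop:bigO} then yields $\dim W_m\asymp m^{\kappa(W)}$ on $\N(W)$, so that
$$\vol_{X|Z}(L)=\limsup_{m\to+\infty}\frac{d!}{m^d}\dim W_m=0\iff\kappa(W)<d.$$
It remains to identify $\kappa(W)$ with the generic dimension of the image of $Z$ under $\Phi_m$, for $m$ large and divisible.

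For this I would introduce the Kodaira map $\Psi_m:Z\dashrightarrow\PP W_m$ attached to the restricted series. The surjection $H^0(X,mL)\twoheadrightarrow W_m$ has kernel exactly the sections vanishing on $Z$, which contribute vanishing coordinates to $\Phi_m(z)$ for $z\in Z$; consequently $\Phi_m|_Z$ factors through $\Psi_m$, with an induced isomorphism $\Psi_m(Z)\iso\Phi_m(Z)$ between the images. In particular $\dim\Phi_m(Z)=\dim\Psi_m(Z)$ for every $m\in\N(W)$, and the argument in the proof of Lemma \ref{lem:dimphi} shows that this common dimension equals $\kappa(W)$ for every sufficiently large and divisible $m$. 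Combining this with the displayed equivalence yields the characterisation.

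The main obstacle I expect is the identification $\Psi_m(Z)\iso\Phi_m(Z)$: one needs $\Phi_m$ to be regular at the generic point of $Z$ (which follows from $Z\not\sub\B(L)$) and the linear projection $\PP H^0(X,mL)\dashrightarrow\PP W_m$ to be defined along $\Phi_m(Z)$ (which is automatic since its centre is cut out by the sections vanishing on $Z$). Everything else is a direct consequence of Proposition \ref{prop:bigO} and Lemma \ref{lem:dimphi}.
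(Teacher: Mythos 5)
Your proposal is correct and follows essentially the same route as the paper: apply Proposition \ref{prop:bigO} to the restricted algebra $R(X|Z,L)$ to reduce the vanishing of $\vol_{X|Z}(L)$ to $\kappa(X|Z,L)<\dim Z$, then identify $\kappa(X|Z,L)$ with $\dim\Phi_m(Z)$ via the Kodaira map of the restricted series and Lemma \ref{lem:dimphi}. The compatibility $\dim\Psi_m(Z)=\dim\Phi_m(Z)$ that you flag as the main obstacle is exactly what the paper encodes in its commutative diagram with the closed immersion $\PP H^0(X|Z,mL)\hookrightarrow\PP H^0(X,mL)$, and your justification of it is sound.
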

\begin{proof} Introduce the image $R(X|Z,L)\subseteq R(Z,L)$  of the restriction map $R(X,L)\to R(Z,L)$. To say that $Z$ is not contained in $\B(L)$ means that $R(X|Z,L)$ is non-trivial. For all $m$ large and divisible enough there is a commutative diagram
\begin{equation*}
\xymatrix{ \hskip .2cm Z_{\null} \ar@{^{(}->}^{i}[d] \ar@{-->}^{\hskip -1cm \Psi_m}[r] &  \hskip .1cm \PP H^0(X|Z,mL)_{\null}  \ar@{^{(}->}^{j} [d] \\ X \ar@{-->}_{\hskip -1cm \Phi_m}[r] &  \hskip .1cm \PP H^0(X,mL) }
\end{equation*}
in which both $i$ and $j$ are closed immersions. Then
$$
\kappa(X|Z,L)=\dim\Psi_m(Z)=\dim\Phi_m(Z)
$$
for all $m$ large and divisible enough. On the other hand, Proposition \ref{prop:bigO} implies that $\vol_{X|Z}(L)=0$ iff $\kappa(X|Z,L)<\dim Z$, and the result follows. 
\end{proof}

\section{Proof of the main theorems} 
\label{max}
\subsection{Proof of Theorem A}
Our main tool will be the following result from \cite{bbp} (whose proof is characteristic free). 

\begin{lemma}\cite[Proposition 2.3]{bbp}\label{lem:bbp} Let $\pi:X'\to X$ be a birational morphism between normal projective varieties. If $L$ is a big line bundle on $X$, then 
$$
\B_+(\pi^*L)=\pi^{-1}\left(\B_+(L)\right)\cup\Exc(\pi).
$$
\end{lemma}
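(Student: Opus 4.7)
The plan is to prove both inclusions in $\B_+(\pi^*L)=\pi^{-1}(\B_+(L))\cup\Exc(\pi)$, using two standard ingredients: (i) the characterization that $x\notin\B_+(M)$ on a projective variety iff there is a $\QQ$-linear relation $M\equiv_\QQ A+D$ with $A$ ample, $D$ effective, and $x\notin\Supp D$; and (ii) Nakamaye's theorem in the form $\B_+(\pi^*A)=\Exc(\pi)$ for every ample divisor $A$ on $X$, which is available in arbitrary characteristic through the references cited in the introduction.

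For the inclusion $\B_+(\pi^*L)\subseteq\pi^{-1}(\B_+(L))\cup\Exc(\pi)$, I would argue contrapositively. Given $x'\notin\Exc(\pi)$ with $\pi(x')\notin\B_+(L)$, pick a decomposition $L\equiv_\QQ A+D$ (ample plus effective) with $\pi(x')\notin\Supp D$, and pull it back to $\pi^*L\equiv_\QQ\pi^*A+\pi^*D$ with $x'\notin\Supp\pi^*D$ (since $\pi$ is an isomorphism near $x'$). Because $\pi^*A$ is big and nef with $\B_+(\pi^*A)=\Exc(\pi)$, and $x'\notin\Exc(\pi)$, there exists a further decomposition $\pi^*A\equiv_\QQ A''+F$ with $A''$ ample on $X'$, $F$ effective, and $x'\notin\Supp F$. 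Combining gives $\pi^*L\equiv_\QQ A''+(F+\pi^*D)$, exhibiting $x'\notin\B_+(\pi^*L)$.

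For the reverse inclusion, I would handle the two summands of $\pi^{-1}(\B_+(L))\cup\Exc(\pi)$ separately. The containment $\Exc(\pi)\subseteq\B_+(\pi^*L)$ follows from a curve-intersection argument: pick a $\pi$-contracted curve $C$ through any $x'\in\Exc(\pi)$ (the fibre being positive-dimensional); then $\pi^*L\cdot C=0$, while any ample-plus-effective decomposition $\pi^*L\equiv_\QQ A'+D'$ with $x'\notin\Supp D'$ would force $A'\cdot C>0$ and $D'\cdot C\ge 0$, a contradiction. For $\pi^{-1}(\B_+(L))\subseteq\B_+(\pi^*L)$ I would again work contrapositively: starting from $\pi^*L\equiv_\QQ A'+D'$ on $X'$ avoiding $x'\notin\Exc(\pi)$, combine it with an auxiliary relation $N_1\pi^*A\sim N_2A'+F$ (with $A$ ample on $X$, $F$ effective on $X'$ and $x'\notin\Supp F$, available from $\B_+(\pi^*A)=\Exc(\pi)$ applied to $x'$) to obtain a linear equivalence $\pi^*(N_2L-N_1A)\sim N_2D'-F$ on $X'$; push forward via $\pi_*$ (which acts as the identity on Cartier pullbacks by normality, and takes isomorphism-locus supports to their $\pi$-images via Zariski's main theorem) to produce an effective representative of a Cartier multiple of $L-A$ on $X$ whose support avoids $\pi(x')$, whence $\pi(x')\notin\B_+(L)$.

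The main technical obstacle is controlling the Cartier/Weil discrepancy in the pushforward step: the individual divisors $\pi_*D'$ and $\pi_*F$ on $X$ need not be $\QQ$-Cartier when $X$ fails to be $\QQ$-factorial, and only their specific combination—equivalent to the pullback of a Cartier class—is guaranteed to come from $\mathrm{Pic}(X)_\QQ$. Checking that this combination genuinely yields an effective Cartier divisor on $X$ whose support avoids $\pi(x')$ is where the hypothesis that $X$ (not merely $X'$) is normal, combined with the isomorphism $\pi\colon X'\setminus\Exc(\pi)\to X\setminus\pi(\Exc(\pi))$, does the essential bookkeeping.
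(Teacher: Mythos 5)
A preliminary remark: the paper does not actually prove this lemma, but imports it verbatim from \cite[Proposition 2.3]{bbp}, so I am comparing your attempt with the standard argument rather than with a proof in the text. Your first inclusion and your curve-intersection argument for $\Exc(\pi)\subseteq\B_+(\pi^*L)$ are correct granting your two ingredients, but the step proving $\pi^{-1}(\B_+(L))\subseteq\B_+(\pi^*L)$ has a genuine gap. From $N_2\pi^*L\sim N_2A'+N_2D'$ and $N_1\pi^*A\sim N_2A'+F$ you deduce $\pi^*(N_2L-N_1A)\sim N_2D'-F$; but $N_2D'-F$ is a \emph{difference} of effective divisors, and neither it nor its pushforward $N_2\pi_*D'-\pi_*F$ has any reason to be effective. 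Without effectivity, knowing that $\pi(x')$ avoids the support of some non-effective representative of the class $N_2L-N_1A$ gives no control on $\B(N_2L-N_1A)$, and the decomposition criterion (i) cannot be applied: it requires an ample-plus-\emph{effective} splitting. The obstacle you flag at the end (Cartier versus Weil after pushforward) is real but secondary --- the argument already breaks before that point.

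This direction is in fact the easy one and needs neither the pushforward nor Nakamaye. Since $X$ is normal and $\pi$ is birational, $\pi_*\O_{X'}=\O_X$, hence $H^0(X',\pi^*M)=H^0(X,M)$ and $\B(\pi^*M)=\pi^{-1}\left(\B(M)\right)$ for every line bundle $M$ on $X$. Fix $A$ ample on $X$ and $H$ ample on $X'$; then $\pi^*A+H$ is ample because $\pi^*A$ is nef, so for $m$ large enough
$$
\pi^{-1}\left(\B_+(L)\right)\subseteq\pi^{-1}\left(\B(mL-A)\right)=\B(m\pi^*L-\pi^*A)\subseteq\B\left(m\pi^*L-\pi^*A-H\right)\cup\B(H)=\B_+(\pi^*L).
$$
Separately, be aware that invoking Nakamaye's theorem to obtain $\B_+(\pi^*A)=\Exc(\pi)$ makes the lemma rest on a much deeper result than itself: that identity is precisely the special case $L=A$ of the statement being proved, and over arbitrary fields it was only established in \cite{bir}, whereas the introduction of the present paper explicitly treats Nakamaye's theorem as something \emph{not} recoverable from these techniques. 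Only the inclusion $\B_+(\pi^*A)\subseteq\Exc(\pi)$ is needed for your first inclusion, and it admits an elementary proof (realize $\pi$ as the blow-up of an ideal sheaf, so that $m\pi^*A-E$ is ample for a suitable effective Cartier $E$); as written, your proof is not self-contained and is uncomfortably close to circular at this point.
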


One direction in Theorem A is almost trivial. Indeed, if $A$ is a very ample line bundle on $X$, there exists $m_0\in\N$ such that $\B(m_0L-A)=\B_+(L)$. It follows that $mm_0L-mA$ is base point free on $X\setminus\B_+(L)$ for all $m$ large and divisible enough, which implies that $\Phi_{mm_0}$ is an isomorphism on $X\setminus\B_+(L)$.

Conversely, pick $m$ large and divisible enough to ensure that $\Bs(mL)=\B(L)$ and that $\Phi_m$ is birational onto its image. Consider the commutative diagram
\begin{equation}
\label{diag}
\xymatrix{ \hskip .2cm X_m \ar_{\mu_m}[d] \ar^{f_m}[r] &  \hskip .1cm Y_m  \ar_{\nu_m}[d] \\ X \ar@{-->}_{\hskip -.2cm \Phi_m}[r] &  \hskip .1cm \Phi_m(X) }
\end{equation}
where $\mu_m$ is the normalized blow-up of $X$ along the base ideal of $mL$, $\nu_m$ is the normalization of $\Phi_m(X)$, and $f_m:X_m\to Y_m$ is the induced birational morphism between normal projective varieties. By construction, we have a decomposition 
$$
\mu_m^*(mL)=f_m^*A_m+F_m
$$
where $A_m$ is an ample line bundle on $Y_m$ and $F_m$ is an effective divisor with 
$$
\Supp F_m=\mu_m^{-1}\left(\B(L)\right). 
$$ 
If we denote by $U_m\subseteq X\setminus\B(L)$ the largest open subset on which $\Phi_m$ is an isomorphism, then $\nu_m\circ f_m$ is an isomorphism on $\mu_m^{-1}(U_m)$ since $\mu_m$ is an isomorphism over $X\setminus\B(L)$, and it follows that 
\begin{equation}\label{equ:um}
\mu_m^{-1}(U_m)\subseteq X_m\setminus\left(\Exc(f_m)\cup\Supp F_m)\right).
\end{equation}
Since $\mu_m(\Exc(\mu_m))$ is contained in $\B(L)\subseteq\B_+(L)$, Lemma \ref{lem:bbp} yields
$$
\B_+(\mu_m^{\ast}L) =  \mu_m^{-1}(\B_+(L)).
$$
On the other hand, we have
$$
\B_+(\mu_m^*L)=\B_+(\mu_m^*(mL))=\B_+(f_m^*A_m + F_m) \subseteq \B_+(f_m^*A_m) \cup \Supp(F_m).
$$
Another application of Lemma \ref{lem:bbp} thus shows that
$$
\mu_m^{-1}\left(\B_+(L)\right)\subseteq\Exc(f_m)\cup\Supp F_m,
$$
and we conclude as desired that $U_m\subset X\setminus\B_+(L)$, thanks to (\ref{equ:um}).

\subsection{Proof of Theorem B}\label{c}
We use the notation in the previous section. Let $Z$ be an irreducible component of $\B_+(L)$, so that $Z$ is necessarily positive dimensional by \cite[Proposition 1.1]{elmnp2} (which relies on a result of \cite{z} valid for normal varieties over any algebraically closed field). If $Z$ is contained in $\B(L)$, then we obviously have $\vol_{X|Z}(L)=0$ since $H^0(X|Z,mL)=0$ for all $m\ge 1$. We may thus assume that $Z$ is not contained in $\B(L)$; in view of Corollary \ref{cor:contracted}, we are to show that
$$
\dim\Phi_m(Z)<\dim Z
$$
for all $m$ large and divisible enough. The proof of Theorem A gives that
\begin{equation}
\mu_m^{-1}\left(\B_+(L)\right)=\mu_m^{-1}(X\setminus U_m)=\Exc(f_m) \cup \Supp F_m
\end{equation}
for all $m$ large and divisible enough, so that the strict transform $Z_m$ of $Z$ on $X_m$ is an irreducible component of $\Exc(f_m)$. Since $f_m$ is a birational morphism between normal varieties, it follows that $\dim f_m(Z_m)< \dim Z$. As $\nu_m$ is finite, it follows as desired that
$$
\dim\Phi_m(X)=\dim (\nu_m\circ f_m)(Z_m)<\dim Z.
$$

\end{document}